\documentclass[11pt]{article}
\usepackage{amsmath}
\usepackage{amssymb}
\usepackage{amsthm}
\usepackage{enumerate}
\usepackage{graphicx,color}

\usepackage{amsfonts,amsmath,amssymb}
\usepackage{url}
\usepackage{epsfig,latexsym,graphicx}
\usepackage{esint}
\usepackage{graphicx,latexsym,amssymb,amsmath,amsfonts}
\setlength{\textheight}{22cm}
\setlength{\topmargin}{-0.2 in}
\setlength{\textwidth}{15.25cm}
\setlength{\oddsidemargin}{8pt}
\setlength{\evensidemargin}{8pt}

\parindent 1 pc

\newtheorem{theorem}{Theorem}[section]
\newtheorem{lemma}[theorem]{Lemma}

\newtheorem{proposition}[theorem]{Proposition}

\theoremstyle{definition}
\newtheorem{remark}{Remark}[section]


\newcommand{\pr}[1]{\operatorname{\mathbf{P}}\left(#1\right)}

\newcommand{\E}[1]{\operatorname{\mathbf{E}}\left[#1\right]}

\newcommand{\vol}[1]{\operatorname{vol}\left(#1\right)}

\newcommand{\critical}{\mathrm{c}}
\newcommand{\critdet}{{det}}

\newcommand{\tdet}{T_{det}}

\newcommand{\IGNORE}[1]{}

\newcommand{\speed}{{S}}

\begin{document}

\title{Phase transition for finite-speed detection\\  among moving particles}

\author{Vladas Sidoravicius\thanks{Instituto de Matem\'atica Pura e Aplicada, Rio de Janeiro, Brazil.}
\and Alexandre Stauffer\thanks{Department of Mathematical Sciences, University of Bath, U.K. Supported in part by a Marie Curie Career Integration Grant PCIG13-GA-2013-618588 DSRELIS.}}
\date{\today}
\maketitle

\begin{abstract}
Consider the model where particles are initially distributed on
$\mathbb{Z}^d, \, d\geq 2$, according to a Poisson point process of intensity
$\lambda>0$,  and are moving in continuous time as independent simple
symmetric random walks.
We study the escape versus detection problem, in which the target, initially
placed at the origin of $\mathbb{Z}^d, \, d\geq 2$, and changing its location on the lattice
in time according to some rule, is said to be detected if at some finite time
its position coincides with the position of a  particle.
We consider the case where the target can move with speed at most 1,
according to any continuous function and can adapt its motion based on
the location of the particles.
We show that there exists sufficiently small $\lambda_* > 0$, so that
if the initial density of  particles $\lambda < \lambda_*$, then the target can avoid detection forever.
\newline
\newline
\emph{Keywords and phrases.} Poisson point process, target detection, oriented space-time percolation.
\newline
MSC 2010 \emph{subject classifications.}
Primary 82C43; 
Secondary 60G55, 
          60K35. 
\end{abstract}

\section{Introduction}\label{sec:intro}

Let $\Pi$ be a Poisson point process of intensity $\lambda>0$ on $\mathbb{Z}^d, \, d\geq2 $.
We label all points of this process by positive integers in some arbitrary way, {\em{i.e.}}
$\Pi = \{ p_j \}_{j\geq 1}$, and interpret the points of $\Pi$ as \emph{particles}. We denote by $\eta_j (0), \; j \geq 1$, the initial
position of the $j^{\text{th}}$  particle, and we will assume that each particle $p_i, \, i \geq 1$, moves as an independent continuous-time random walk  on
$\mathbb{Z}^d$. More formally,  for each $k\geq 1$, let $(\zeta_k(t))_{t\geq 0}$ be an
independent  continuous-time random walk on $\mathbb{Z}^d$ starting from the origin.
Then  $\eta_k(t):=\eta_k(0) + \zeta_k(t)$ denotes the location of the $k$-th particle at time $t$.

In addition, we consider an extra particle, called \emph{target}, which at time $0$ is
positioned at the origin, and is moving  on $\mathbb{Z}^d, \, d \geq 2$ in time,  according to a certain
prescribed rule.
We say that the target is \emph{detected} at time $t$, if there exists a particle $p_j$ located
at time $t$  at the same vertex as the target.
We will assume that the target particle wants to evade detection and can do so
by moving in continuous time according to any continuous function on $\mathbb{Z}^d$,
which can depend on the past, present and future positions of the particles.

\noindent More precisely, let $\mathcal{P}$ be the set of functions $g\colon \mathbb{R}_+ \to \mathbb{Z}^d$ such
that:
\begin{align}
\label{j1}
&\text{for any $g \in \mathcal{P}$, any $t\geq 0$ and any $\xi>0$, if $\|g(t+\xi)-g(t)\| > 1$ then} \notag\\
&\text{there exists $\xi'\in(0,\xi)$ for which $\|g(t+\xi')-g(t)\| < \|g(t+\xi)-g(t)\|$.}
\end{align}
We view $\mathcal{P}$ as the set of all permitted trajectories for the target, and
 $g(t), \; g \in \mathcal{P}$, denotes the
position of the target at time $t$. The condition (\ref{j1})  in the definition of $\mathcal{P}$
prevents the target to make long range jumps, i.e. for any trajectory $g\in\mathcal{P}$, the target  is allowed to jump only  between nearest neighbor vertices of $\mathbb{Z}^d$.

We say that $g\in \mathcal{P}$ is \emph{detected} at time $t$ if there exists a particle
 $p_j \in \Pi$, for some $j\geq 1$, such that $\eta_j(t) = g(t)$, and define the detection time of $g$ as follows:
 $$
   \tdet(g) = \inf\Big\{t \geq 0 \colon g(t) \in \bigcup\nolimits_{k\geq 1} \eta_k(t)\Big\}.
$$
In~\cite[Theorem~1.1]{Stauffer11} it was shown that there exists a phase transition in $\lambda$ so
that, if $\lambda$ is large enough, for all $g\in\mathcal{P}$ we have $\tdet(g)<\infty$
almost surely.  Hence, the target cannot avoid detection forever even if it knew the
past, present and \emph{future} positions of the particles at all times, and could move at any time at any arbitrarily large speed.

Here we consider a parameter $0 < \speed < + \infty$ and let
$\mathcal{P}_\speed\subset \mathcal{P}$ be the set of all trajectories $g\in\mathcal{P}$
with maximum \emph{speed} $\speed$, {\it {i.e.}},
$$
\text{$\mathcal{P}_\speed := \{$ $g\in\mathcal{P}$:  $\forall t\geq 0$  $\forall \xi>0$,  $\|g(t+\xi)-g(t)\| \leq \xi \speed \lor 1 \}$.}
$$
Then define
$$
\lambda_\critdet(\speed)
= \inf\Big\{\lambda \geq 0 \colon \pr{\tdet(g) < \infty}
= 1 \text{ for all }g\in\mathcal{P}_\speed\Big\}
$$
and
$$
\lambda_\critdet(\infty)
= \inf\Big\{\lambda \geq 0 \colon \pr{\tdet(g) < \infty}
= 1 \text{ for all }g\in\mathcal{P}\Big\}.
$$
The main result in~\cite[Theorem~1.1]{Stauffer11}, mentioned above,  gives that $\lambda_\critdet(\infty)\in(0,\infty)$. Since
for any $\speed\leq \speed'$ we have
$\mathcal{P}_\speed \subseteq \mathcal{P}_{\speed'}$, then
$$
\lambda_\critdet(\speed) \leq \lambda_\critdet(\speed')\leq \lambda_\critdet(\infty) < \infty.
$$
It was also observed in ~\cite{Stauffer11}, that for sufficiently small $\lambda > 0$, there is 
a strictly positive probability for the target, starting from the origin,  to avoid detection forever, provided it can move 
at any time at any arbitrarily large speed, {\it {i.e.}} $\lambda_\critdet(\infty) > 0$.

The main contribution of this work is to establish an analogous result for any bounded speed, {\it i.e.} to show the existence of a non-trivial phase transition for all finite speeds $0 < \speed < + \infty$.
In other words, for any $\speed>0$, if the density $\lambda$ of particles  is 
small enough, with positive probability a target moving with maximum speed $\speed$ can
avoid detection forever.
\begin{theorem}
\label{thm:main}
   For any $\speed>0$, we have $\lambda_\critdet(\speed)>0$.
\end{theorem}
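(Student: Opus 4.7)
The plan is a space-time block renormalization that reduces the problem to oriented site percolation with a highly supercritical parameter. Fix spatial and temporal scales $L$ and $T$ with $L \leq ST/100$, so that a target moving with speed at most $\speed$ can cross a spatial box of side $L$ in time $T$ with considerable slack for detours. Partition $\mathbb{Z}^d \times \mathbb{R}_+$ into blocks
\[
B_{x,n} := \bigl(Lx + [0,L)^d\bigr) \cap \mathbb{Z}^d \times [nT, (n+1)T], \qquad (x,n)\in \mathbb{Z}^d \times \mathbb{Z}_{\geq 0},
\]
and, for each block, let $\tilde{B}_{x,n}$ be its spatial fattening by a margin $D = C\sqrt{T\log(1/\lambda)}$ chosen large enough that, with probability at least $1-e^{-cT}$, no particle starting outside $\tilde{B}_{x,n}$ at time $nT$ enters $B_{x,n}$ during $[nT,(n+1)T]$.

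I would declare $B_{x,n}$ \emph{good} if (i) no external particle (i.e. one whose position at time $nT$ lies outside $\tilde{B}_{x,n}$) enters $B_{x,n}$ during the time window, and (ii) a target entering at the spatial center of $B_{x,n}$ at time $nT$ has at least one continuous speed-$\leq \speed$ trajectory inside the block that avoids every particle and arrives at time $(n+1)T$ at the center of some neighboring block $B_{x',n+1}$ with $\|x-x'\|_\infty\leq 1$. Condition (i) holds w.h.p.\ by the random-walk deviation estimate. Conditional on (i), the particles relevant for (ii) are Poisson with mean $O(\lambda(L+D)^d)$; when $\lambda$ is small enough, there are at most a fixed constant $K$ such particles with high probability. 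Because each particle trajectory is a one-dimensional curve in the $(d+1)$-dimensional space-time block with $d\geq 2$, $K$ trajectories cannot obstruct all speed-$\leq \speed$ paths through the block, so (ii) holds with probability tending to $1$ as $\lambda\to 0$.

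Since goodness of $B_{x,n}$ depends only on the initial positions and subsequent trajectories of particles originating in $\tilde{B}_{x,n}$, the field $\{\mathbf{1}[B_{x,n}\text{ is good}]\}$ has finite-range dependence on the renormalized lattice. By the classical domination theorem of Liggett, Schonmann and Stacey, for $\lambda$ sufficiently small this field stochastically dominates an independent Bernoulli site percolation on $\mathbb{Z}^d\times \mathbb{Z}_{\geq 0}$ of parameter arbitrarily close to $1$. Viewing this as oriented site percolation in the time direction $n$, one deduces via the standard contour estimate that, with positive probability, there is an infinite oriented path of good blocks starting at the block containing the origin at time $0$. The target follows this path by concatenating, inside each visited good block, the speed-$\leq\speed$ trajectory guaranteed by (ii); it is thereby never detected, which gives $\lambda_\critdet(\speed)>0$.

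The principal technical obstacle is establishing (ii) quantitatively: showing that a bounded number of random-walk trajectories inside a space-time block cannot block all valid target paths. A head-on approach (``just go straight'') fails because a single particle can happen to be exactly at the target's intended position at the wrong moment. The plan is a \emph{safe-corridor} argument: subdivide the spatial box into many small sub-boxes, show that with high probability at least one sub-box is untouched by any particle throughout $[nT,(n+1)T]$, and then route the target into this safe sub-box and out again toward the designated exit center. The spatial slack $\speed T - L$ is what provides the time budget for these detours, and the $d\geq 2$ condition is what guarantees that finitely many one-dimensional obstacles in a $(d+1)$-dimensional block still leave such a safe sub-box available.
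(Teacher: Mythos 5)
Your proposal takes a genuinely different route from the paper. The paper does \emph{not} use a space-time block renormalization with time as the oriented direction. Instead it forces the target to move radially outward at linear speed: it defines $K_i=i\times\left[\tfrac{\|i\|_1}{\speed},\tfrac{\|i\|_1+1}{\speed}\right]$ so that the target at $\ell_1$-distance $k$ from the origin is there during the fixed short window $T_i$, and the oriented percolation is on the spatial lattice $\mathbb{H}_d\cong\mathbb{Z}^2$ with $\|i\|_1$ as the oriented coordinate. This is what makes the analysis tractable: since the admissible region moves away from the origin at speed $\speed$ while particles diffuse at rate $\sqrt{t}$, each particle enters the admissible space-time region at most once at a well-defined ``level'' $k$, and one can bound a ``region of influence'' around that entry point with an exponential tail (the paper's Lemma~3.4). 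The coupling across levels is then controlled exactly once per particle, and the stochastic domination is obtained via Holroyd--Martin rather than Liggett--Schonmann--Stacey, because the obstructions at each level form a one-dimensional coverage process by balls of exponentially small radius.

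Your proposal, by contrast, keeps the blocks spatially static and only advances in time, and it has two genuine gaps as written.

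First, the finite-range dependence claim that you feed into the Liggett--Schonmann--Stacey theorem is false. Your goodness condition (i) is the event that \emph{no} particle whose position at time $nT$ lies outside $\tilde B_{x,n}$ enters $B_{x,n}$ during $[nT,(n+1)T]$; by definition this event is measurable with respect to particles arbitrarily far away, and is therefore not a function of the particles ``originating in $\tilde B_{x,n}$.'' You cannot simultaneously keep (i) inside the definition of goodness and assert bounded dependency range. The standard way to repair this (redefine goodness using only particles in $\tilde B_{x,n}$, then handle long jumps from elsewhere by a separate sparseness argument or a sprinkling step) is nontrivial and not sketched. The paper sidesteps this entirely: because each particle enters the outward-moving admissible region at exactly one level, the relevant particle configuration at level $k$ is a fresh thinned Poisson process (Lemma~3.1), and the regions of influence are independent across sites of $J_k$ by construction, so no LSS-type theorem with approximate independence is ever needed.

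Second, your condition (ii) is not established. You acknowledge that a particle sitting at the intended exit point at time $(n+1)T$ blocks every admissible path, and propose a safe-corridor fix, but the fix reintroduces the same problem at the corridor's exit: once the target is parked in an untouched sub-box, it must still be at the \emph{prescribed} space-time exit point $(\text{center of }B_{x',n+1},(n+1)T)$, and a single particle at that point at that time blocks all exits, regardless of how many safe sub-boxes exist. The dimensional heuristic (``$K$ curves cannot obstruct a $(d+1)$-dimensional region'') does not apply to the codimension-zero constraint of hitting a fixed exit event. To make this work one would have to allow the target to exit through a whole range of space-time points and couple the goodness definitions of adjacent blocks accordingly, which your proposal does not do. The paper avoids this issue by never fixing a single exit point: the target simply waits at $i$ throughout the interval $T_i$ and then steps to an adjacent vacant $i'$, and vacancy of $K_{i'}$ is part of the percolation event rather than a property of the block the target is currently in.

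In short, the idea of reducing to highly supercritical oriented percolation is the same, but the particular block decomposition you chose creates an unbounded dependency range and an exit-point obstruction that the paper's outward-moving decomposition is specifically designed to avoid.
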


\begin{remark}
   In many of the references mentioned in the related work discussion below~\cite{Kesidis,Konst,PSSS11,PeresSousi11,Stauffer11},
   the problem of target detection was considered in a continuous-space variant of the model.
   In this variant, particles are given by a Poisson point process of intensity $\lambda$ on $\mathbb{R}^d$, and move independently as
   Brownian motions. Then, we say that the target is detected at time $t$ if there exists a particle within distance~$1$ from the target at that time.
   This variant is an extension of the widely studied \emph{Boolean model} (also called \emph{random geometric graph} or \emph{continuum percolation})
   to a mobile setting. We highlight that, with little change in the proof, Theorem~\ref{thm:main} can also be shown to hold in this continuous-space version.
   We discuss how to change our proof to this setting in Section~\ref{sec:brownian}.
\end{remark}


\medskip

\noindent {\bf Related work.} The problem of detecting a target by moving particles has been studied in
other settings. For example, ~\cite{Kesidis,Konst}
considered the continuous version of this model, where particles move as
Brownian motion in $\mathbb{R}^d$, and studied the case where the target
is non-mobile and stays put at the origin (using our notation, this corresponds to $g\equiv 0$).  Using arguments from
stochastic geometry, they derived the precise distribution of the
detection time; in particular, they showed that
\begin{equation}
   \pr{\tdet(g)>t} = \exp\left(-\lambda \vol{W_d(t)}\right) \quad\text{ when $g\equiv 0$},
   \label{eq:wiener}
\end{equation}
where $W_d(t)$ is the $d$-dimensional Wiener sausage up to time
$t$. The volume of the Wiener sausage is known to grow as $\sqrt{t}$ in $d=1$, $\frac{t}{\log t}$ in $d=2$, and $t$ in $d\geq 3$.

For the case of a mobile target, if the target has to move \emph{independently} of the particles (i.e., $g$ is a deterministic function),
in\ \cite{PSSS11} it was shown that, for any given $g$,
a similar expression as in~\eqref{eq:wiener} holds with $W_d(t)$ replaced by a Wiener sausage with drift $-g$.
Also,~\cite{PSSS11}, and in particular~\cite{PeresSousi11}, showed that, among all deterministic functions $g$,
the one that maximizes $\pr{\tdet(g)>t}$ is $g\equiv 0$. In other words, if the target has to move independently of the particles, the
best strategy for the target to avoid detection is to stay put.
See also the corresponding result for random walks on
$\mathbb{Z}^d$ in~\cite{DGRS}.
For the case where the motion of the target may depend on the positions
of the particles, it is shown in~\cite[Theorem~1.1]{Stauffer11} via a multi-scale analysis
that, for sufficiently large
$\lambda$, the target cannot avoid detection almost surely even if it knows beforehand the position of all particles at
all times.
A result of similar flavor was established in~\cite[Proposition~8]{KS05} for the study of the rate at which an infection spreads
among moving particles.
The result in~\cite[Theorem~1.1]{Stauffer11} gives in fact more information.
It establishes that, provided $\lambda$ is large enough, $\pr{\tdet(g)>t}$ decays at least as quickly as
$\exp\left(-\frac{C\sqrt{t}}{\log^c t}\right)$ in $d=1$,
$\exp\left(-\frac{C t}{\log^c t}\right)$ in $d=2$,
$\exp\left(-C t\right)$ in $d\geq 3$. This bound is tight (up to the constant factor $C$) and matches up with the case $g\equiv 0$ for $d\geq 3$.
Intuitively, this gives that a target that knows the positions of all nodes at all times cannot evade detection much longer than a non-mobile target.


\section{Proof of Theorem~\ref{thm:main}}

The hardest case is to prove Theorem~\ref{thm:main} in two dimensions. In higher dimensions, we 
simply show that the target can avoid detection by moving only in the first two dimensions; i.e., 
we define 
the hyperplane
\begin{equation}
  \mathbb{H}_d = \mathbb{Z}^2 \times O_{d-2},
  \label{eq:defh}
\end{equation}
where $O_{d-2}$ stands for the origin of $\mathbb{Z}^{d-2}$, and show that the target can avoid detection by only moving 
within $\mathbb{H}_d$. (In the case $d=2$, we simply define $\mathbb{H}_2=\mathbb{Z}_2$.)

For any $i\in \mathbb{H}_d$, consider the time interval
\begin{equation}
   T_i=\left[\frac{\|i\|_1}{\speed},\frac{\|i\|_1+1}{\speed}\right],
   \label{eq:ti}
\end{equation}
and the space-time line segment
$$
   K_{i} = i \times T_i.
$$
We will show that for $\lambda$  small enough, there exists a trajectory $g$ for the
target that is contained in the space-time region $\bigcup_{i\in\mathbb{H}_d}K_i$
and is never detected. Note that, for such a trajectory $g$, we have $g\in\mathcal{P}_\speed$.
We say that $K_i$ is $\emph{vacant}$ if there is no particle of $\Pi$ inside $K_i$, and
 $E_i$ will denote the indicator random variable that $K_i$ is vacant, i.e.
$E_i := \mathbb{I}_{\{\text{$K_i$ is vacant}\}}$.
We will show that for small enough $\lambda$, the process induced by $\{E_i\}_{i\in\mathbb{H}_d}$ stochastically
dominates an independent supercritical oriented percolation process
on the square lattice. 
%
%
\begin{proposition}
\label{pro:stochdom}
   For any $\lambda >0$ and $\speed>0$,  there exists
   $p=p(\lambda,\speed) >0$, so that if $\{X_i\}_{i\in\mathbb{H}_d}$
   are i.i.d.\ Bernoulli random variables taking values $0$ or $1$ with mean $p$, then
   $\{E_i\}_{i\in\mathbb{H}_d}$ stochastically dominates $\{X_i\}_{i\in\mathbb{H}_d}$.
   Moreover, for any $\speed>0$, we have
   \begin{equation}
   \label{eq:domin}
      \liminf_{\lambda\downarrow 0}p(\lambda,\speed)= 1.
   \end{equation}
\end{proposition}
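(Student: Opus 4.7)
My approach has two main components: a uniform lower bound on $\pr{E_i = 1}$, and a reduction to a finite-range-dependent family on which a Liggett--Schonmann--Stacey (LSS) type theorem yields iid Bernoulli stochastic domination.

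For the marginal, the law of $\Pi$ is invariant under the independent-SRW dynamics, so at the starting time $\|i\|_1/\speed$ of the interval $T_i$ the particles still form a Poisson point process of intensity $\lambda$. By Poisson thinning, the number of particles visiting the site $i$ during $T_i$ (of length $1/\speed$) is Poisson with mean
$$
\mu_i \;=\; \lambda \sum_{z\in\mathbb{Z}^d}\pr{\zeta(t) = z \text{ for some } t\in [0, 1/\speed]} \;=\; \lambda\,\E{|R_{1/\speed}|},
$$
where $R_h$ denotes the range of one continuous-time SRW during an interval of length $h$ (the equality uses the change of variable $z = i - y$ and the Markov/translation invariance of SRW). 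Since $|R_h|$ is bounded by $1$ plus the Poisson number of jumps in $[0,h]$, one gets $\E{|R_{1/\speed}|} \leq 1 + c(d)/\speed$, hence $\pr{E_i = 1} = e^{-\mu_i} \geq 1 - C(d,\speed)\lambda$ uniformly in $i \in \mathbb{H}_d$, with a constant $C(d,\speed)$ independent of $\lambda$ and $i$. In particular $\liminf_{\lambda\downarrow 0}\pr{E_i=1}=1$.

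For the stochastic domination, the obstacle is that $E_i$ a priori depends on all of $\Pi$, since any trajectory may with small probability reach $K_i$. I would fix a truncation scale $L = L(\speed)$ (large) and replace $E_i$ by the local proxy $\tilde E_i = E_i \cdot \ind{D_i^L}$, where $D_i^L$ is the event that every particle visiting $K_i$ lies within $B(i,L)$ at time $\|i\|_1/\speed$. Then $\tilde E_i \leq E_i$, and the Gaussian/Poisson tail on SRW displacement over an interval of length $1/\speed$ gives $\pr{(D_i^L)^c} \leq C\exp(-cL^2\speed)$. Thus $\pr{\tilde E_i = 1} \geq 1 - C(d,\speed)\lambda - C\exp(-cL^2\speed)$, which still tends to $1$ as $\lambda\downarrow 0$ for $L$ fixed appropriately. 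By the Markov property of the dynamics at time $\|i\|_1/\speed$ together with PPP independence over disjoint windows, $\tilde E_i$ is determined by the PPP$(\lambda)$ configuration in $B(i,L)$ at that time and the independent subsequent SRW increments over $T_i$, making $\{\tilde E_i\}_{i\in\mathbb{H}_d}$ an $O(L)$-dependent family. LSS then gives domination of $\{\tilde E_i\}$, and therefore of $\{E_i\}$, by iid Bernoulli$(p)$ with $p\to 1$ as $\lambda\downarrow 0$.

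The \emph{main obstacle} is achieving genuine finite-range dependence of $\{\tilde E_i\}$ across different time shells $\{i : \|i\|_1 = n\}$: a single trajectory of $\Pi$ may contribute to both $\tilde E_i$ and $\tilde E_j$ for $i,j$ in different shells, coupling them even when they are spatially far apart. I would handle this by (i) augmenting $D_i^L$ with an extra low-probability condition excluding ``long-lived'' trajectories that span multiple shells, and (ii) building the family shell-by-shell, conditioning at each intermediate time $n/\speed$ on the PPP$(\lambda)$ configuration there and using the strong Markov property of the SRW. Once $\{\tilde E_i\}$ is genuinely $O(L)$-dependent with marginals close to $1$, LSS gives the claimed iid Bernoulli domination and the limit $\liminf_{\lambda\downarrow 0}p(\lambda,\speed)=1$.
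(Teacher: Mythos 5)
Your marginal estimate is fine, and your overall plan (localize, establish approximate finite-range dependence, invoke a domination theorem) is in the right spirit. But the step you yourself flag as the ``main obstacle'' --- controlling trajectories that span several time shells $J_k$ --- is precisely where the real work lies, and your (i)--(ii) sketch does not actually resolve it. The paper's resolution is the space-time cone $C^\delta$: it fixes $\delta = \speed/(4\sqrt d)$ so that the receding region $\bigcup_i K_i$ moves away from any $(x,t)$ with $t\in T_x$ strictly faster than $\delta$, defines $\tau$ as the last exit time of the walk from $C^\delta$, and proves that after time $\tau$ the particle can never again touch any $K_j$. This turns each particle entering level $k$ into a bounded ``region of influence'' $B_i$ whose radius $L_i$ has an exponential tail (Lemma~\ref{lem:influence}), and the proof then proceeds shell-by-shell exactly as your (ii) envisions, via the thinned Poisson process $\Psi_k$. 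Your proposed augmentation ``excluding long-lived trajectories'' is the right idea, but without a quantitative statement like the cone argument it is not clear that the exclusion is both low-probability \emph{and} sufficient to make $\tilde E_i$ determined by a bounded window --- those two requirements pull in opposite directions.

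There is a second, more structural mismatch. Once you control each particle's influence as above, the dependence range of the $E_i$'s within a single shell $J_k$ is not a fixed constant $O(L)$ but a \emph{random} variable with exponential tails: the radius $L_i$ of the region of influence. LSS requires genuine deterministic $k$-dependence, so it does not apply directly; you would need to truncate $L_i$, estimate the truncation error, and then argue about the error separately. The paper avoids this by observing that each $J_k$ is a one-dimensional line and invoking the Holroyd--Martin domination theorem~\cite[Theorem~3]{HM12}, which is specifically designed for the setting of random intervals with exponentially decaying lengths blocking a 1-d lattice. That theorem is well suited to the ``region of influence'' picture and gives the conditional domination~\eqref{eq:level} directly; composing over $k$ yields the proposition. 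In short: the outline is on track, but the two load-bearing ingredients --- the cone/region-of-influence bound on shell-spanning, and a domination result adapted to random rather than fixed range (Holroyd--Martin rather than LSS) --- are exactly the pieces left unfilled in your sketch.

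Finally, a small but real issue in your truncation: as written, $\tilde E_i = E_i\cdot\ind{D_i^L}$ with $D_i^L$ = ``every particle visiting $K_i$ lies in $B(i,L)$ at time $\|i\|_1/\speed$'' is vacuously implied by $E_i=1$, so $\tilde E_i = E_i$ and nothing has been localized. You would instead want something like $\tilde E_i = 1$ iff no particle located in $B(i,L)$ at time $\|i\|_1/\speed$ visits $K_i$ \emph{and} no particle crosses from outside $B(i,L)$ into some inner ball during $T_i$; getting this to be simultaneously local, a lower bound for $E_i$, and high-probability requires care.
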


\noindent The proof of Theorem~\ref{thm:main} is a straightforward application
   of Proposition~\ref{pro:stochdom}.
\begin{proof}[{\bf Proof of Theorem~\ref{thm:main}}]
   Eq. (\ref{eq:domin}) of Proposition \ref{pro:stochdom} implies that, given $S>0$,
   there exists $\lambda_c (S)>0$, such that for $0< \lambda < \lambda_c  (S)$, we have $p(\lambda,S)>p_\critical$, where
   $p_\critical$ is the critical probability for oriented site percolation on $\mathbb{Z}^2$.
   Hence, with positive probability, there exists an infinite oriented path of adjacent
   sites of $\mathbb{H}_d$, say $i_0 = 0, i_1,i_2,\ldots$, such that for all $j\geq 0$ we have $\|i_j\|_1=j$ and $K_j$ is
   vacant. Thus, the path $g\in\mathcal{P}_\speed$ which follows the segment $K_{i_j}$ in the time
   direction, and at time $\frac{\|i_j\|_1+1}{\speed}$ moves to
   $K_{i_{j+1}}$ and then follows along $K_{i_{j+1}}$ until the next jump to $K_{i_{j+2}}$, etc., for all $j \geq 0$, is the path
   for which $\tdet(g)=\infty$. 
\end{proof}


\section{Proof of Proposition~\ref{pro:stochdom}}

For any $k \geq 1$, let $J_k: = \{ x \in \mathbb{H}_d \, : \, ||x||_{1} = k \}$, and
 $\mathcal{G}_k$ be the $\sigma$-algebra
generated by $\{E_i\}_{i\in\mathbb{H}_d\colon\|i\|_{_1} \leq k}$. 
The goal of this section is to show that, for $k\geq 1$,
the following holds:
\begin{equation}
   \text{conditioned on any $G\in \mathcal{G}_{k-1}$, } \{E_i\}_{i\in\mathbb{H}_d\colon i\in J_{k}}
   \text{ stochastically dominates } \{X_i\}_{i\in\mathbb{H}_d\colon i\in J_{k}}.
   \label{eq:level}
\end{equation}

\noindent We will analyze the states of sites of $J_k$ inductively on $k=0,1,\ldots$
Once~\eqref{eq:level} is established, Proposition~\ref{pro:stochdom} follows directly.
The proof of~\eqref{eq:level} will be split in several steps and lemmas.
We start with an informal  description of the proof, discussing the main
ingredients used to establish~\eqref{eq:level}, and then proceed to the rigorous arguments.

The main idea of the proof is the following: by definition, the space-time region
$\bigcup_{i\in\mathbb{H}_d}K_i$ grows linearly in time and moves away from the origin
at linear speed. In particular, for any time $t$,
the site $i$, such that $t\in T_i$, has $\ell_1$ norm of order $t$.
Since by time $t$ a particle, performing simple symmetric random walk, typically moves a distance
of order $\sqrt{t}$, it implies that each individual particle
can spend only a limited amount of time
inside the region $\bigcup_{i\in\mathbb{H}_d}K_i$. Thus, if the intensity of the
Poisson point process is sufficiently small, we will show that the union of all vacant $K_i$'s contains an infinite connected
component; i.e., the region of
$\bigcup_{i\in\mathbb{H}_d}K_i$ that is not visited by particle ``percolates'' in space-time.

To make the above argument rigorous,  fix $\lambda >0$,  small enough, such that there exists $1 \leq k_0 < + \infty$, so that, with
sufficiently large probability, there is no particle in the space-time region $\bigcup_{i\in J_k}K_i$ for all $k\leq k_0$.
Let $k=k_0+1$, and select all particles that visit the space-time region $\bigcup_{i\in J_k}K_i$. Let $u$ be one such particle.
We observe the motion of $u$ from the time it first visits $\bigcup_{i\in J_k}K_i$ onwards.
In order to do this, we introduce the \emph{region of influence} of $u$, 
which is a random region given by a ball centered at the space point which is the canonical space-coordinate projection of the space-time point where $u$
first visits $\bigcup_{i\in J_k}K_i$, and which has a random radius that depends on the motion of $u$ from that time onwards.
This region of influence will intersect all sites $i'$ of $\mathbb{H}_d$
for which $u$ can enter $K_{i'}$. As discussed above, $u$ can only spend a finite time inside
$\{K_i\}_{i\in \mathbb{H}_d}$, so the region of influence of $u$ is bounded. We show that the region of influence of $u$
has a radius with an exponentially decaying tail.

For a general level $k$, we repeat the argument above:  among all particles that
enter the space-time region $\bigcup_{i\in J_{k}}K_i$ select only those  which have not entered the space-time region
$\bigcup_{j=0}^{k-1}\bigcup_{i\in J_{j}}K_i$, and then define their region of influence in a similar way.
The goal is to show that the sites of $\mathbb{H}_d$ that do \emph{not} belong to the region of influence of any particle stochastically
dominates an independent percolation process that is known to be supercritical.

\bigskip

Now we begin the rigorous proof of Proposition~\ref{pro:stochdom}.
First we establish~\eqref{eq:level}. For $k=0$ the set $J_k$ has only one element and~\eqref{eq:level} holds
in a trivial manner. Now fix $k\geq1$ and let $\Psi_0=\Pi$.
Consider the particles that did not enter the space-time region
$\bigcup_{j=0}^{k-1}\bigcup_{i\in J_j}K_i$, and let $\Psi_k$ be the point
process determined by the location of these particles at time $\frac{k}{S}$.
\begin{lemma}\label{lem:ppp}
   For any $k \geq 0$,  $\Psi_k$ is a non-homogenenous Poisson
   point process of intensity uniformly bounded above by $\lambda$.
\end{lemma}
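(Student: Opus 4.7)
The plan is to realize $\Psi_k$ as the image of a Poisson point process under a deterministic measurable map, and then invoke the mapping (displacement) theorem for Poisson point processes. The key observation driving the approach is that the space-time region $F_k := \bigcup_{j=0}^{k-1}\bigcup_{i\in J_j}K_i$ is \emph{deterministic}: it depends only on $k$ and $\speed$, not on the particle configuration. Consequently, the event that a given particle avoids $F_k$ during its motion up to time $k/\speed$ is a function of that particle's own trajectory alone, and these survival events are independent across particles with no interaction to track.

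Concretely, I would first enrich $\Pi$ into the marked Poisson point process $\widetilde{\Pi} = \{(\eta_k(0), \zeta_k)\}_{k\geq 1}$ on $\mathbb{Z}^d \times \mathcal{W}$, where $\mathcal{W}$ denotes the Skorokhod space of continuous-time simple random walk paths starting at $0$; by the independence of the $\zeta_k$'s from each other and from $\Pi$, the marked process $\widetilde{\Pi}$ is itself a Poisson point process, with intensity equal to $\lambda$ times counting measure on $\mathbb{Z}^d$ times the walk law $\mathbb{P}_{\mathrm{SRW}}$. Next I would define a measurable map $\Phi_k$ that sends $(x,\zeta)$ to $x+\zeta(k/\speed)$ whenever the trajectory $(x+\zeta(s),s)$, $s\in[0,k/\speed]$, avoids $F_k$, and to a cemetery state otherwise. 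Then $\Psi_k$ is exactly the pushforward of $\widetilde{\Pi}$ under $\Phi_k$ with the cemetery discarded, so the mapping theorem immediately yields that $\Psi_k$ is a (non-homogeneous) Poisson point process on $\mathbb{Z}^d$.

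For the intensity bound, the intensity of $\Psi_k$ at $y\in\mathbb{Z}^d$ is
\[
\mu_k(y) \;=\; \lambda\!\sum_{x\in\mathbb{Z}^d}\pr{\Phi_k(x,\zeta)=y},
\]
and dropping the avoidance requirement can only enlarge each summand, whence
\[
\mu_k(y) \;\leq\; \lambda\!\sum_{x\in\mathbb{Z}^d}\pr{\zeta(k/\speed)=y-x} \;=\; \lambda,
\]
since $\zeta(k/\speed)$ has a probability mass function summing to $1$ on $\mathbb{Z}^d$. The only point that requires a careful check rather than a mere assertion is the measurability of $\Phi_k$ and the applicability of the mapping theorem on the infinite-dimensional mark space $\mathcal{W}$; but because the survival event depends only on the restriction of $\zeta$ to the bounded time interval $[0,k/\speed]$ and $F_k$ is a closed set there, this reduces to a routine Borel-measurability verification and is not a genuine obstacle to the argument.
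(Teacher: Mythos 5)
Your proposal is correct and takes essentially the same approach as the paper: the paper first displaces $\Pi$ to time $k/\speed$ to obtain a Poisson process $\Upsilon$ of intensity $\lambda$ and then thins by the survival probability $p(x)$, while you fold displacement and thinning into one measurable map on the marked process $\widetilde{\Pi}$ and invoke the mapping theorem. The two are the same construction, and the intensity bound $\lambda p(x)\leq\lambda$ is identical in both.
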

\begin{proof}
   Let $\Upsilon$ be the point process determined by the location of the
   particles of $\Psi_0$ at time $k/\speed$,
   which is a Poisson point
   process of intensity $\lambda$. For any $x$, let $p(x)$ be the probability
   that a random walk that at time $k/\speed$ is located at $x$ does not visit
   $\bigcup_{j=0}^{k-1}\bigcup_{i\in J_j}K_i$ during $[0,k/\speed)$.
   Then, $\Psi_k$ is a Poisson point process obtained by \emph{thinning}
   $\Upsilon$ in such a way that its intensity measure at position $x$ is
   $\lambda p(x) \leq \lambda$.
\end{proof}

\noindent For each $i\in J_k$, let
\begin{align*}
N_i :=  \;
& \text{number of particles of $\Psi_k$ that visit the set $J_k$ during} \\
& \text{the interval $[k/\speed,(k+1)/\speed]$ and enter $J_k$ through $i$.}
\end{align*}

\begin{lemma}\label{lem:numberhit}
   There exists a positive constant $c=c(d,\speed)$ so that
   the set $\{N_i\}_{i\in \mathbb{H}_d}$ is stochastically dominated by $\{M_i\}_{i\in \mathbb{H}_d}$, where $M_i$ are i.i.d.\ Poisson random variables of
   mean $c\lambda$.
\end{lemma}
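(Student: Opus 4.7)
The plan is to combine the marking property of Poisson point processes with a bound on the expected range of the continuous-time random walk. By Lemma~\ref{lem:ppp}, $\Psi_k$ is a Poisson point process on $\mathbb{Z}^d$ of intensity bounded above by $\lambda$, and by the strong Markov property, conditional on the positions of the particles of $\Psi_k$ at time $k/\speed$, their trajectories after time $k/\speed$ are independent simple symmetric random walks. Mark each particle at position $x$ by the first site $i\in J_k$ (if any) it enters during $[k/\speed,(k+1)/\speed]$; since the marks depend only on the conditionally independent future trajectories, the Poisson marking theorem gives that $\{N_i\}_{i\in J_k}$ are mutually independent Poisson random variables with
\[
\E{N_i} \leq \lambda \sum_{x\in\mathbb{Z}^d} q_i(x),
\]
where $q_i(x)$ is the probability that a simple random walk started from $x$ first enters $J_k$ at $i$ within time $1/\speed$. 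For $i\in\mathbb{H}_d\setminus J_k$ we trivially have $N_i=0$.

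The next step is to control $\sum_x q_i(x)$ uniformly in $i$. Using the trivial bound $q_i(x)\leq \prspace{\tau_i\leq 1/\speed}{x}$, where $\tau_i$ denotes the hitting time of $i$, the reversibility of simple symmetric random walk yields $\prspace{\tau_i\leq 1/\speed}{x}=\prspace{\tau_x\leq 1/\speed}{i}$. Summing over $x\in\mathbb{Z}^d$ gives
\[
\sum_{x\in\mathbb{Z}^d} q_i(x) \leq \Espace{|R_{1/\speed}|}{i},
\]
the expected cardinality of the range of the walk started at $i$ up to time $1/\speed$. Since $|R_{1/\speed}|$ is bounded by $1$ plus the number of jumps of the continuous-time walk in time $1/\speed$, whose mean is a constant depending only on $d$ and $\speed$, we obtain a uniform bound $\sum_x q_i(x)\leq c(d,\speed)$. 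Hence each $N_i$ is Poisson with mean at most $c(d,\speed)\lambda$.

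Finally, since the $\{N_i\}_{i\in\mathbb{H}_d}$ are independent and each $N_i$ is marginally stochastically dominated by a Poisson$(c(d,\speed)\lambda)$ variable, a coordinate-wise independent coupling furnishes the desired joint stochastic domination by i.i.d.\ $\{M_i\}_{i\in\mathbb{H}_d}$ with the same mean. The main conceptual subtlety is justifying the applicability of the marking theorem despite the thinning used to construct $\Psi_k$ from $\Psi_0$; this is legitimate because the thinning is measurable with respect to the trajectories up to time $k/\speed$, whereas the marks are measurable with respect to the strictly future increments of the walks, which are independent of the past by the Markov property.
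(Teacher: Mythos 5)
Your proof takes a genuinely different route from the paper's, and in one place a more elegant one. The paper introduces an auxiliary family $\{N_i'\}$ built from fresh, independent Poisson configurations for each level $k$, argues that $\{N_i'\}$ dominates $\{N_i\}$, and then bounds $\E{N_i'} = \lambda\sum_x \tilde p(x)$ by combining the exponential decay $\tilde p(x)\leq e^{-c_1\|x-i\|_1}$ (valid for $\|x-i\|_1\geq 2/\speed$, using that the jump count is Poisson of mean $1/\speed$) with the polynomial growth of the number of lattice sites at a given distance. You instead apply the marking theorem directly to $\Psi_k$ and control the intensity via reversibility:
\[
\sum_{x\in\mathbb{Z}^d}\prspace{\tau_i\leq 1/\speed}{x}=\sum_{x\in\mathbb{Z}^d}\prspace{\tau_x\leq 1/\speed}{i}=\Espace{|R_{1/\speed}|}{i}\leq 1+\E{\text{\# jumps in time }1/\speed},
\]
which is a clean one-line estimate that bypasses the tail-sum computation entirely. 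Both yield a constant $c(d,\speed)$, and both reduce the problem to Lemma~\ref{lem:ppp} and the Markov property.

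There is, however, a gap in your last step. Your marking argument is applied level by level to $\Psi_k$, which establishes that $\{N_i\}_{i\in J_k}$ are mutually independent Poisson \emph{for each fixed $k$}; it does not, as written, establish independence \emph{across} levels, yet your conclusion asserts that ``the $\{N_i\}_{i\in\mathbb{H}_d}$ are independent'' and couples coordinatewise. The assertion is actually true, but to justify it you should apply the marking theorem once to the full process $\Pi$ (marks being entire trajectories) and note that the sets of trajectories ``first enters $\bigcup_j K_j$ at level $\|i\|_1$ and does so through site $i$'' are pairwise disjoint over all $i\in\mathbb{H}_d$: a particle that reaches $J_k$ during $T_i$ enters some $K_i$ and is therefore excluded from every $\Psi_{k'}$ with $k'>k$. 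This gives joint independence of $\{N_i\}_{i\in\mathbb{H}_d}$ in one stroke, and your intensity bound transfers by first conditioning on the walk's position at time $\|i\|_1/\speed$. Relatedly, the sentence ``For $i\in\mathbb{H}_d\setminus J_k$ we trivially have $N_i=0$'' does not parse: $N_i$ is defined for \emph{every} $i\in\mathbb{H}_d$ using $k=\|i\|_1$, so there is no fixed $k$ to compare against. The paper avoids both issues by constructing the dominating family $\{N_i'\}$ to be independent across $k$ by design, at the cost of a heavier computation of $\E{N_i'}$.
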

\begin{proof}
   We define a set of random variables $\{N_i'\}_{i\in J_k}$ which are distributed independently across different values of $k$.
   For any given $k$,
   consider an independent configuration of particles distributed as a Poisson point process of intensity $\lambda$ over $\mathbb{Z}^d$.
   Let each particle perform a continuous-time random walk for time $1/\speed$. Then, for each $i\in J_k$, let
   $N_i'$ be the number of particles that visit $i$ during $(0,1/\speed)$ and visit $i$ before visiting any other site of $J_k$.
   By Lemma~\ref{lem:ppp} and independence across different values of $k$, we have that
   $\{N_i'\}_{i\in \mathbb{H}_d}$ stochastically dominates $\{N_i\}_{i\in \mathbb{H}_d}$.
   It then suffices to show that, for any given $k$,
   $\{N_i'\}_{i\in J_k}$ is stochastically dominated by $\{M_i\}_{i\in J_k}$.

   By thinning of Poisson point processes we have that $\{N_i'\}_{i\in J_k}$ are independent Poisson random variables.
   It remains to show that there exists a constant $c=c(d,\speed)$ so that, uniformly for all $i$, we have $\E{N_i'}\leq c\lambda$.
   Fix $i\in J_k$ and let $\tilde p(x)$ be the probability that a particle starting from $x\in\mathbb{Z}^d$ visits $i$ during $[0,1/\speed)$ and does so before
   visiting any other site of $J_k$. Then, we have that
   $$
      \E{N_i'} = \lambda \sum_{x\in\mathbb{Z}^d}\tilde p(x).
   $$
   Since the number of jumps
   of a particle during $[0,1/\speed)$ is a Poisson random variable of mean $1/\speed$,
   there is a constant $c_1>0$ such that, for any $x$ so that $\|x-i\|_1\geq 2/\speed$, we have $\tilde p(x) \leq e^{-c_1\|x-i\|_1}$. Then, using that the
   number of sites at distance $z$ from $i$ is at most $c_2 z^{d-1}$ for some constant $c_2>0$, we have
   $$
      \E{N_i'} \leq  \lambda \left( \sum_{x\colon \|x-i\|_1< 2/\speed}\tilde p(x) + \sum_{z=2/\speed}^\infty c_2 z^{d-1}e^{-c_1 z}\right)
      \leq c\lambda,
   $$
   for $c=c(d,\speed)$ sufficiently large.
\end{proof}

We now introduce some notations that we will use to define the region of influence of a site.
Fix $\delta=\frac{\speed}{4\sqrt{d}}$ and let $C^\delta_{0,0} \equiv C^\delta \subset \mathbb{Z}^d \times \mathbb{R}_+$ be the space-time cone
$$
   C^\delta= \{(y,t) \colon y\in\mathbb{Z}^d, t\geq 0 \text{ and }\|y\|_2 < \delta t\}.
$$
We claim that for any $x\in \mathbb{H}_d$ and any $t\in T_x$, the shifted cone $C^\delta_{x,t}=(x,t)+C^\delta$ does not intersect $K_j$ for any $j\neq x$.
In order to see this, let $j\in\mathbb{H}_d$ be such that $\|j\|_1\geq \|x\|_1$. Then, for any $s$ for which $(j,s)\in K_j$ we have
\begin{align*}
   s-t
   \leq \frac{1}{\speed}+\frac{\|j\|_1-\|x\|_1}{\speed}
   & \leq \frac{1+\|j-x\|_1}{\speed} \\
   & \leq \frac{1+\sqrt{d}\|j-x\|_2}{\speed}
   \leq \frac{1+\|j-x\|_2}{4\delta}
   \leq \frac{\|j-x\|_2}{2\delta}.
\end{align*}
On the other hand, by the definition of $C^\delta$, for any $(j,s')\in C^\delta_{x,t}$ we have $s'-t > \frac{\|j-x\|_2}{\delta}$.

For a random walk $(\xi(t))_t$ that starts from the origin define $\tau$ as
the last time that $(\xi(t))_t$ is outside $C^\delta$; i.e.,
$$
   \tau = \inf\{t \geq 0 \colon (\xi(s),s) \in C^\delta \text{ for all } s\geq t\}.
$$
Now define the random variable
\begin{equation}
   \chi = \sup\{\|\xi(t)\|_2 \colon t\in [0,\tau]\}.
   \label{eq:chi}
\end{equation}
The definition of $\chi$ is illustrated in Figure~\ref{fig:mushroom}(a).
\begin{figure}[tbp]
   \begin{center}
      \includegraphics[scale=.8]{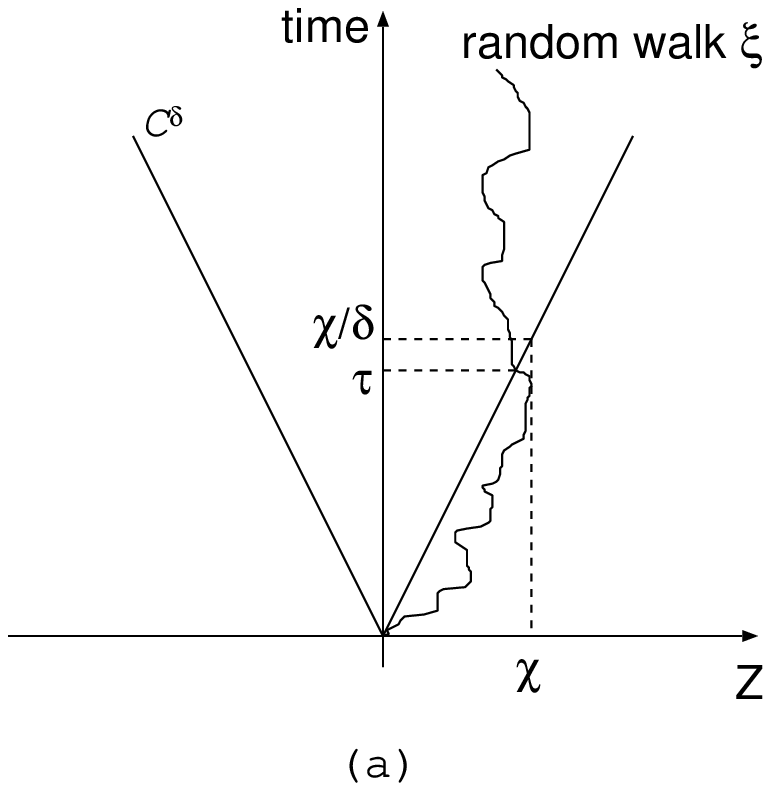}
      \hspace{\stretch{1}}
      \includegraphics[scale=.8]{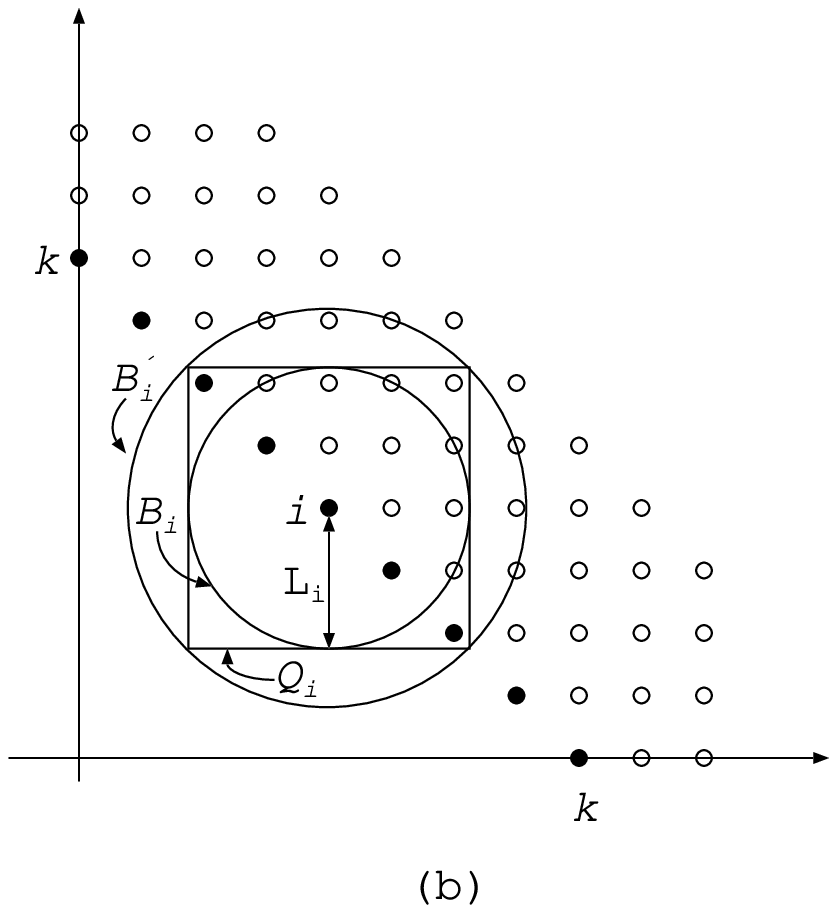}
   \end{center}\vspace{-.5cm}
   \caption{(a) Illustration of the space-time cone $C^\delta$ for $d=1$ and the definition of $\tau$ and $\chi$.
      (b) Illustration of the region of influence of site $i$ for $d=2$.}
   \label{fig:mushroom}
\end{figure}

We are now ready to define the region of influence of a site.
From now on we fix $k$ and $i\in J_k$, and
we denote by $B_i$ the region of influence of site $i$.
We couple $M_i$ and $N_i$ so that $M_i\geq N_i$.
If $M_i=0$, we set $B_i=\emptyset$. Otherwise we proceed as follows.
We construct a region for each of the $N_i$ particles that visit $K_i$.
Consider the $j$th such particle and let $\chi_j$ be an independent random variable distributed as $\chi$, and define
$t_j$ as the first time the particle visits $K_i$.
With this, define the space-time cylinder 
$$
   S_j=(B(i,\chi_j) \cap \mathbb{H}_d) \times [t_j,t_j+\chi_j/\delta],
$$
where $B(x,r)\subset\mathbb{Z}^d$ stands for the ball of radius $r$
centered at $x$.
Note that, for any time $s\geq t_j+\chi_j/\delta$, the
particle is inside the space-time cone $C_{i,t_j}^\delta$.
Consequently, at any time $s\geq t_j+\chi_j/\delta$, the
$j$th particle cannot intersect $\bigcup_{z\in\mathbb{H}_d} K_z$; hence the sites $\iota\in\mathbb{H}_d$ for which $j$ can intersect $K_\iota$ are contained in $S_j$.
Define $\chi_j$ in the same way as above for all $N_i < j \leq M_i$, and take $L_i=\max_{j=1}^{M_i} \chi_j$ and
$B_i = B(i,L_i) \cap \mathbb{H}_d$.
Note that $B_i$ contains all sites that intersect $\bigcup_{j=1}^{N_i}S_j$.
Since the $\{M_i\}_{i\in J_k}$ are i.i.d.\ random variables, the regions $\{B_i\}_{i\in J_k}$ are also i.i.d.

We have the following lemma bounding the size of $B_i$.
\begin{lemma}\label{lem:influence}
   There exist constants $c,c'>0$ independent of $\lambda$ such that, for all $x\geq 1$ and $i\in\mathbb{H}_d$,
   $$
      \pr{L_i > x} \leq c\lambda \exp(-c'x).
   $$
\end{lemma}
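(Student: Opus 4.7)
The plan is to first bound the tail of a single $\chi$ exponentially, and then transfer this bound to $L_i = \max_{j=1}^{M_i} \chi_j$ using the fact that $M_i$ is Poisson.

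\textbf{Step 1 (single-particle tail).} I will show that $\pr{\chi > x} \leq C e^{-c' x}$ for constants $C, c' > 0$ depending only on $d$ and $\speed$. Suppose $\chi > x$, so there exists $t_0 \in [0,\tau]$ with $\|\xi(t_0)\|_2 > x$. Fix a threshold $T = \alpha x$ with $\alpha > 0$ small (depending on the jump rate of $\xi$) and split cases. If $t_0 \leq T$, then the walk has made at least $x$ unit-length jumps by time $T$, so the Poisson jump-counter $N_T$ satisfies $N_T \geq x$; choosing $\alpha$ so that $\E{N_T} \leq x/2$, standard Poisson tail bounds yield probability $\leq e^{-c_1 x}$. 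If $t_0 > T$, then $\tau \geq t_0 > T$, and the definition of $\tau$ forces some $s \geq T$ with $\|\xi(s)\|_2 \geq \delta s$. Exponential Chebyshev applied coordinate-wise to $\xi$ (mean zero, bounded increments) gives $\pr{\|\xi(s)\|_2 \geq \delta s} \leq e^{-c_\delta s}$ for every $s$; since $\xi$ is piecewise constant between jumps, it suffices to verify the cone-violation event at integer times, and a union bound over $s \in \mathbb{N} \cap [T, \infty)$ bounds $\pr{\tau > T}$ by $C_2 e^{-c_2 x}$.

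\textbf{Step 2 (Poisson mixing).} By Lemma~\ref{lem:numberhit}, $M_i$ is Poisson with mean $\mu := c\lambda$, and by construction $\chi_1, \chi_2, \ldots$ are i.i.d.\ copies of $\chi$ independent of $M_i$. Writing $p := \pr{\chi > x}$, the probability generating function of the Poisson distribution yields
$$\pr{L_i > x} = 1 - \E{(1-p)^{M_i}} = 1 - e^{-\mu p} \leq \mu p \leq c\lambda \cdot C e^{-c' x},$$
which is the claimed bound after renaming constants.

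\textbf{Main obstacle.} The technical heart of the argument is the linear-in-$s$ large deviation bound $\pr{\|\xi(s)\|_2 \geq \delta s} \leq e^{-c_\delta s}$ used in Step 1. This is classical via exponential Chebyshev, since the coordinates of $\xi$ are mean-zero continuous-time random walks with bounded increments, but it is the only nontrivial probabilistic input: once it is in place, the case split on $t_0$, the discretization of the cone-escape event in time, and the Poisson-mixing computation in Step 2 are routine.
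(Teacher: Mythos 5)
Your proof takes essentially the same route as the paper: split the event $\{\chi>x\}$ according to a time threshold, bound each case exponentially, and then pass from the tail of $\chi$ to that of $L_i=\max_{j\le M_i}\chi_j$ using the Poisson structure of $M_i$. (The paper uses the first-moment bound $\pr{L_i>x}\le\E{M_i}\pr{\chi>x}$, you use the exact PGF identity $\pr{L_i>x}=1-e^{-\mu p}\le\mu p$; these give the same estimate.) The choice of threshold ($T=\alpha x$ versus $x/2$) and the phrasing of the case split on $t_0$ are cosmetic differences.

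The one step that needs tightening is the claim that, because $\xi$ is piecewise constant, ``it suffices to verify the cone-violation event at integer times.'' That is not quite right: the walk can satisfy $\|\xi(n)\|_2<\delta n$ and $\|\xi(n+1)\|_2<\delta(n+1)$ and still jump outside the cone and back during $(n,n+1)$, so a union bound over the single-time events $\{\|\xi(n)\|_2\ge\delta n\}$, $n\in\mathbb{N}$, does not by itself control $\pr{\tau>T}$. The paper handles this by splitting each unit interval $I_a=[x/2+a,\,x/2+a+1]$ into two bad events: (i) $\|\xi(x/2+a)\|_2$ exceeds half of the cone radius $\delta(x/2+a)$, and (ii) the walk makes more than $\tfrac{\delta(x/2+a)}{2}$ jumps during $I_a$. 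If neither occurs, the walk stays inside the cone throughout $I_a$, and each bad event has probability at most $e^{-c(x+a)}$, so summing over $a$ gives the exponential tail for $\chi$. Replace your integer-time union bound with this two-part estimate (position at the left endpoint of each unit interval plus jump count within it) and the rest of your argument goes through unchanged.
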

\begin{proof}
   First we derive an upper bound for $\pr{\chi \geq x}$. The probability that a random walk performs at least $x$ jumps in a time interval of length
   $x/2$ is $e^{-c_1 x}$ for some positive constant $c_1$.
   If this does not happen, then $\chi$ can only be at least $x$ if at some time after $x/2$ the random walk
   is outside the cone $C^\delta$. For any integer $a\geq 0$, let $I_a$ be the time interval
   $[z/2 + a, z/2 + a+1]$. We show that, during $I_a$, the probability that the distance between the random walk and the origin exceeds
   $\delta(x/2+a)$ is at most $e^{-c_2 (x+a)}$ for some positive constant $c_2=c_2(d,\speed)$.
   This follows since, with probability $1-e^{-c_3 (x+a)}$, the random walk is within distance
   $\frac{\delta(x/2+a)}{2}$ from the origin at time $x/2+a$ and,
   with probability $1-e^{-c_4(x+a)}$, the random walk performs less than $\frac{\delta(x/2+a)}{2}$ jumps during a time interval of length
   $1$.
   Then, summing over $a$ we obtain
   $$
      \pr{\chi \geq z } \leq c_5 e^{-c_6 z} \quad\text{for some positive constants $c_5=c_5(d,\speed)$ and $c_6=c_6(d,\speed)$.}
   $$
   From this, we obtain
   $$
      \pr{L_i > x}
      \leq \E{M_i}\pr{\chi > x}
      \leq c c_5 \lambda e^{-c_6 x},
   $$
   where $c$ comes from Lemma~\ref{lem:numberhit}.
\end{proof}

Now we refer to Figure~\ref{fig:mushroom}(b).
If $B_i=\emptyset$, set $Q_i=\emptyset$. Otherwise,
let $Q_i$ be the square $i+[-L_i,L_i]^d\cap \mathbb{H}_d$ of side length
$2L_i$; note that $B_i$ is inscribed inside $Q_i$.
Consider the $2$-dimensional circle $B_i'$ that circumscribe $Q_i$;
the radius of $B_i'$ is $\sqrt{2}L_i$.
Now consider any site $\iota\in B_i$
so that $\iota \in J_{k'}$ for some $k'\geq k$, and take
any oriented path from the origin to $\iota$. By construction, this path
must contain a site in $Q_i \cap J_k$.

Now, for any $i\in\mathbb{H}_d$, we define $Y_i=0$ if there exists a $j \in \mathbb{H}_d$ with $\|j\|_1=\|i\|_1$ for which
$i \in Q_j$. Otherwise, we set $Y_i=1$. From the argument above we have that we can couple $Y_i$ and $E_i$ so that $Y_i \leq E_i$.
Therefore, if $\{Y_i\}_{i\in J_k}$ stochastically dominates $\{X_i\}_{i\in J_k}$
we establish~\eqref{eq:level}. This last statement holds since the radius of $B_i'$ has an exponential tail by Lemma~\ref{lem:influence}. Also,
the sites $i\in\mathbb{H}_d$ for which $\|i\|_1=k$ form a one-dimensional line segment, thus we can apply a
result by Holroyd and Martin~\cite[Theorem~3]{HM12}, which establishes that $\{Y_i\}_{i\in J_k}$ stochastically dominates $\{X_i\}_{i\in J_k}$, where
$\{X_i\}_{i\in J_k}$ are i.i.d.\ Bernoulli random variables with mean approaching $1$ as $\lambda\to0$. This establishes~\eqref{eq:level} and completes the proof
of Proposition~\ref{pro:stochdom}.
\section{Brownian motions on $\mathbb{R}^d$\label{sec:brownian}}

In this section we discuss how the proof of Theorem~\ref{thm:main} can be adapted to the setting where
particles perform independent Brownian motions on $\mathbb{R}^d$, $d\geq 2$, and the target is
detected as soon as it is within distance $1$ from any particle.

The main changes needed in the proof regards the definition of the space-time region $K_i$ and the definition of the region of influence $B_i$.
We start with $K_i$. For all $i\in\mathbb{H}_d$, define $K_i=B(i,4/3) \times T_i$, where
$B(i,r)$ is the $d$-dimensional closed ball on $\mathbb{R}^d$ of radius~$r$ centered at~$i$, and $T_i$ is defined as in~\eqref{eq:ti}.
Then, the proof of Theorem~\ref{thm:main} (assuming Proposition~\ref{pro:stochdom}) carries through with no further changes, and it remains to
show how the proof of Proposition~\ref{pro:stochdom} needs to be changed to this setting.

The proof of Proposition~\ref{pro:stochdom} is composed of three lemmas. Lemma~\ref{lem:ppp} holds without any changes. For Lemma~\ref{lem:numberhit},
the only change we need is to define $N_i$ as the number of particles of $\Psi_k$ that visit $B(i,4/3)$ during the interval 
$[k/\speed,(k+1)/\speed]$, and first visit $B(i,4/3)$ not after 
visiting $B(j,4/3)$ for every $j\in J_k\setminus\{i\}$. (Note that we allow that the particle visits $B(i,4/3)$ concurrently to visiting $B(j,4/3)$ for
some $j\in J_k\setminus\{i\}$; in this case, this particle
counts to $N_i$ and an independent copy of the particle counts to $N_j$.)
Then Lemma~\ref{lem:numberhit} follows in the same way. 

For Lemma~\ref{lem:influence}, we need to do more changes since we need to define $B_i$ and $L_i$ differently.
From now on, fix $k$ and $i\in J_k$. 
Then let $x\in B(i,4/3)$ and $t\in T_i$ be arbitrary. 
We regard $x$ as the location and $t$ the time that the particle first visits $B(i,4/3)$.
Consider the cone $C^\delta_{x,t}=(x,t)+C^\delta$.
Then, for any $j\not\in B(i,5)$ and $s\in T_j$ we have
$$
   s-t \leq \frac{1+\|j\|_1-\|i\|_1}{\speed} \leq \frac{1+\|j-i\|_2}{4\delta} \leq \frac{1+4/3+\|j-x\|_2}{4\delta} \leq \frac{\|j-x\|_2}{2\delta},
$$
where in the second to last step we apply the triangle inequality, and in the last step we used that $\|j-x\|_2\geq 3$ since $j\not\in B(i,5)$.
Since, for any $(j,s')\in C^\delta$, it holds that $s'-t>\frac{\|j-x\|_2}{\delta}$, 
we obtain that $C^\delta$ does not intersect any $T_j$ for which $j\not\in B(i,5)$.
Now let $\ell$ be a particle from the set of the $N_i$ particles that visit $B(i,4/3)$ during the interval $[k/\speed,(k+1)/\speed]$,
and do so before visiting $B(i',4/3)$ for every $i'\in J_k\setminus\{i\}$.
Let $\chi_\ell$ be a random variable distributed as $\chi$ (cf~\eqref{eq:chi}), and let $L_i$ be the maximum of $\chi_\ell$ over all $\ell$.
Then, we set $B_i=B(i,10+L_i)$ if $M_i\geq1$. With these definitions, Lemma~\ref{lem:influence} holds without further changes and we obtain that the random variable $L_i$
has an exponential tail. Then, the remaining of the proof of Proposition~\ref{pro:stochdom} hold by setting 
$Q_i=i+[-10-L_i,10+L_i]^d \cap \mathbb{H}_d$ and $B_i'$ as the ball that circumscribe $Q_i$.
No further change is needed.

\bibliographystyle{plain}
\bibliography{detection}
\end{document}